\newcommand{\C}{{\mathbb{C}}}
\newcommand{\R}{{\mathbb{R}}}
\newcommand{\sm}{{\operatorname{Sym}}}
\newtheorem{thm}{Theorem}
\theoremstyle{definition}
\newtheorem{rem}[thm]{Remark}
\begin{document}

\title{Eigenvalues of real symmetric matrices}
\markright{Eigenvalues of real symmetric matrices}
\author{Meinolf Geck}

\date{}

\begin{abstract} We present a proof of the existence of real eigenvalues
of real symmetric matrices which does not rely on any limit or compactness
arguments, but only uses the notions of ''sup'', ''inf''.
\end{abstract}

\maketitle

Let $M_n(\R)$ denote the set of all real $n\times n$-matrices and 
$\sm_n(\R)$ be the set of all $A\in M_n(\R)$ that are symmetric. A key 
ingredient of the ''Spectral Theorem'' is the existence of a real eigenvalue 
of a matrix $A\in \sm_n(\R)$. In some way, this uses limit or compactness 
arguments in $\R^n$ (e.g., \cite[Kap.~6, \S 2]{Koe}, \cite{wulf}) or the 
fact that $\C$ is algebraically closed (e.g., \cite[\S 6.4]{fis}). 
Usually, none of these are available in a first course on linear algebra; 
in any case, it seems desirable to isolate the bare ''analytic'' 
prerequisites of this basic result about matrices. We present here a
slight variation of the argument in \cite{Koe}, which refers at only one 
place to the completeness axiom for $\R$.

For $v,w\in \R^n$ (column vectors) we let $\langle v,w\rangle:={^tv}\cdot w$ 
denote the usual scalar product (${^tv}$ is the transpose of $v$, that 
is, a row vector). The Euclidean norm of $v$ is denoted by $\lVert v\rVert=
\sqrt{\langle v, v\rangle}$. We define the norm of a matrix 
$A=(a_{ij})\in M_n(\R)$ by $|A|_\infty= \max\{|a_{ij}|\colon 1\leq i,j 
\leq n\}$. All we need to know about these norms is the following inequality:
\begin{equation*}
\lVert A\cdot v\rVert \leq \sqrt{n}^3\,|A|_\infty \lVert v\rVert \qquad 
\mbox{for all $v\in \R^n$}.  \tag{$\dagger$}
\end{equation*}
This easily follows from the inequalities $|w|_\infty\leq \lVert w\rVert
\leq \sqrt{n}|w|_\infty$ and $|A\cdot w|_\infty \leq n|A|_\infty |w|_\infty$; 
we set $|w|_\infty =\max\{|w_1|,\ldots,|w_n|\}$ for any $w={^t(w_1,
\ldots,w_n)}\in \R^n$.

\begin{rem} \label{rem2} Let $A\in \sm_n(\R)$. By a finite sequence of 
row and column operations, $A$ can be transformed by congruence into a 
diagonal matrix. That is, there is a nonsingular real matrix $P$ and a 
real diagonal matrix $D$ such that $A={^tP}\cdot D\cdot P$; e.g.,
\cite[\S 6.7]{fis}. Since positive real numbers have square roots, we can 
further assume that all non-zero diagonal entries of $D$ are $\pm 1$. Now 
assume that $A\succeq 0$, that is, $\langle v,A\cdot v\rangle \geq 0$ for 
all $v\in \R^n$. (Such a matrix is called positive semidefinite.) Then 
all non-zero diagonal entries of $D$ must be $+1$. Consequently, we have 
the implication:
\[ A\succeq 0 \;\mbox{ and }\; \det(A)\neq 0\quad\Rightarrow\quad
A={^tP}\cdot P\quad \mbox{with $P\in M_n(\R)$ invertible}.\]
\end{rem}

\begin{rem} \label{rem4} Let $A=(a_{ij}) \in\sm_n(\R)$. If $v\in 
\R^n$ is such that $\lVert v\rVert=1$, then all components of $v$ have 
absolute value $\leq 1$ and so $|\langle v,A\cdot v\rangle|\leq 
\sum_{i,j=1}^n |a_{ij}|$. Hence, the set
\[S(A):=\{ \langle v,A\cdot v\rangle \colon v\in \R^n,\rVert v\rVert=1\}
\subseteq \R\]
is bounded. In particular, this set has a greatest lower bound $\mu(A)=
\inf S(A)$. We have
\[ \langle v,A\cdot v\rangle \geq \mu(A)\lVert v\rVert^2 \qquad \mbox{for 
all $v\in \R^n$}.\]
This inequality is clear if $v=0$; if $v\neq 0$, then set $w:=v/\lVert 
v\rVert$ and note that $\langle w,A\cdot w\rangle \geq \mu(A)$.
\end{rem}

By a limit or compactness argument, one can deduce that there exists a 
vector $v_0\in\R^n$ such that $\lVert v_0\rVert=1$ and $\langle v_0,A\cdot 
v_0 \rangle=\mu(A)$. It then follows easily that $v_0$ is an eigenvector 
of $A$ with eigenvalue $\mu(A)$ (see \cite[Kap.~6, \S 2, no.~4]{Koe}). The 
proof below avoids this line of reasoning.

\begin{thm} \label{thm1} If $A\in \sm_n(\R)$, then $\mu(A)$ is an eigenvalue
of $A$.
\end{thm}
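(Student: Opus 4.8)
The plan is to reduce the statement to a singularity question about a shifted matrix, and then extract a contradiction from the norm inequality $(\dagger)$. Write $\mu:=\mu(A)$ and consider $B:=A-\mu I\in\sm_n(\R)$. By the displayed inequality in Remark~\ref{rem4} we have $\langle v,B\cdot v\rangle=\langle v,A\cdot v\rangle-\mu\lVert v\rVert^2\geq 0$ for all $v\in\R^n$, so $B\succeq 0$. Moreover, $\mu$ is an eigenvalue of $A$ precisely when $B$ is singular: if $\det(B)=0$ there is a nonzero $v$ with $B\cdot v=0$, i.e.\ $A\cdot v=\mu v$. Thus the whole theorem reduces to proving $\det(B)=0$.

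First I would argue by contradiction and suppose $\det(B)\neq 0$. Then $B$ satisfies both hypotheses of Remark~\ref{rem2}, so there is an invertible $P\in M_n(\R)$ with $B={^tP}\cdot P$. Consequently, for every $v\in\R^n$,
\[ \langle v,B\cdot v\rangle=\langle v,{^tP}\cdot P\cdot v\rangle=\langle P\cdot v,P\cdot v\rangle=\lVert P\cdot v\rVert^2. \]
The crucial step is to bound $\lVert P\cdot v\rVert$ away from $0$, uniformly over all unit vectors. Applying $(\dagger)$ to the real matrix $P^{-1}$ and the vector $P\cdot v$ gives $\lVert v\rVert=\lVert P^{-1}\cdot(P\cdot v)\rVert\leq\sqrt{n}^3\,|P^{-1}|_\infty\,\lVert P\cdot v\rVert$. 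Setting $c:=(\sqrt{n}^3\,|P^{-1}|_\infty)^{-1}>0$ and taking $\lVert v\rVert=1$, this reads $\lVert P\cdot v\rVert\geq c$.

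Combining the two displays yields $\langle v,B\cdot v\rangle\geq c^2>0$ for every $v$ with $\lVert v\rVert=1$, hence $\mu(B)=\inf S(B)\geq c^2>0$. On the other hand, $S(B)=\{\langle v,A\cdot v\rangle-\mu:\lVert v\rVert=1\}$, so $\mu(B)=\mu(A)-\mu=0$, a contradiction. Therefore $\det(B)=0$, which is what had to be shown. I expect the one genuinely clever point to be the realization that nonsingularity of the positive semidefinite matrix $B$ can be converted, via $(\dagger)$ applied to $P^{-1}$, into a strictly positive lower bound for $S(B)$ that clashes with $\inf S(B)=0$; everything else is bookkeeping. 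It is worth noting that the sole appeal to the completeness axiom is the existence of the infimum $\mu(A)$, already recorded in Remark~\ref{rem4}, so no limit or compactness argument enters the proof.
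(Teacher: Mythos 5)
Your proposal is correct and follows essentially the same route as the paper's own proof: form $B=A-\mu(A)I_n$, assume $\det(B)\neq 0$, factor $B={^tP}\cdot P$ via Remark~\ref{rem2}, and use $(\dagger)$ applied to $P^{-1}$ to bound $\langle v,B\cdot v\rangle$ away from $0$ on unit vectors, contradicting $\inf S(B)=0$. The only cosmetic difference is that you verify $\mu(B)=0$ at the end by translating $S(A)$, whereas the paper records it up front via Remark~\ref{rem4}.
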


\begin{proof} Let $I_n$ be the identity matrix and set $B:=A-\mu(A)
I_n\in \sm_n(\R)$. If $\det(B)=0$, then $\mu(A)$ is an eigenvalue of $A$. So 
let us now assume that $\det(B)\neq 0$. We have $\langle v, B
\cdot v\rangle= \langle v,A\cdot v\rangle-\mu(A)\lVert v\rVert^2$ for all 
$v\in\R^{n}$. Remark~\ref{rem4} shows that $B\succeq 0$ and $\mu(B)=
\inf S(B)=0$. Since also $\det(B)\neq 0$, we can write $B={^tP}\cdot P$, 
where $P\in M_n(\R)$ is invertible (see Remark~\ref{rem2}). 

Now, for any $v\in \R^n$, we have $\langle v,B\cdot v\rangle={^tv} \cdot 
B \cdot v = \lVert P\cdot v\rVert^2$. Furthermore, if $\lVert v\rVert=1$, 
then $1=\lVert P^{-1} \cdot (P\cdot v)\rVert \leq \sqrt{n}^3 |P^{-1}|_\infty 
\lVert P\cdot v \rVert$, using ($\dagger$). Thus, $\langle v,B \cdot v\rangle
\geq 1/(n^3|P^{-1}|_\infty^2)>0$ for all $v\in\R^n$ such that $\lVert 
v\rVert=1$, contradicting $\inf S(B)=0$. 
\end{proof}

\begin{rem} \label{rem5} The argument also works for Hermitian matrices 
$A\in M_n(\C)$. One just has to use the Hermitian product $\langle v,w
\rangle ={^t\overline{v}} \cdot w$ for $v,w\in\C^n$, where the bar denotes 
complex conjugation. If $A$ is Hermitian, then $\langle v,A\cdot v\rangle
\in \R$ for all $v\in \C^n$, so we can define $\mu(A)= \inf S(A)$ as above.
\end{rem}


\bigskip
\noindent \itshape{IAZ -- Lehrstuhl f\"ur Algebra, Universit\"at Stuttgart,
Pfaffenwaldring 57, 70569 Stuttgart, Germany\\
meinolf.geck@mathematik.uni-stuttgart.de}


\begin{thebibliography}{131}
\bibitem{fis}
S. H. Friedberg, A. J. Insel, L. E. Spence, \textit{Linear Algebra}, 3rd ed.,
Prentice Hall, NJ, 1997.

\bibitem{Koe}
M. Koecher, \textit{Lineare Algebra und analytische Geometrie}, Grundwissen 
Mathematik, Springer-Verlag, Berlin, 1983.

\bibitem{wulf}
H. S. Wulf, An algorithm-inspired proof of the spectral theorem in 
$E^n$, {\it Amer. Math. Monthly} {\bf 88} (1981) 49--50.
\end{thebibliography}
\end{document}